\numberwithin{equation}{section}
\newtheorem{theorem}{Theorem}[section]
\newtheorem{lemma}[theorem]{Lemma}
\theoremstyle{remark}
\newtheorem{remark}[theorem]{Remark}
\theoremstyle{definition}
\begin{document}

\title[Approximation properties of multipoint boundary problems]{Approximation properties\\ of multipoint boundary-value problems}


\author[H. Masliuk]{Hanna Masliuk}
\address{National Technical University of Ukraine “Igor Sikorsky Kyiv Polytechnic Institute”, Peremohy Avenue 37, 03056, Kyiv-56, Ukraine}
\email{masliukgo@ukr.net}


\author[O. Pelekhata]{Olha Pelekhata}
\address{National Technical University of Ukraine “Igor Sikorsky Kyiv Polytechnic Institute”, Peremohy Avenue 37, 03056, Kyiv-56, Ukraine}
\email{o.pelehata-2017@kpi.ua}


\author[V. Soldatov]{Vitalii Soldatov}
\address{Institute of Mathematics, National Academy of Sciences of Ukraine, Tereshchenkivska Str. 3, 01004 Kyiv-4, Ukraine}
\email{soldatovvo@ukr.net, soldatov@imath.kiev.ua}

\subjclass[2010]{34B08;34B10}


\dedicatory{To Vladimir Andreevich Mikhailets on the occasion of his 70th birthday}

\keywords{Differential system, boundary-value problem, multipoint problem, approximation of solution}

\begin{abstract}
We consider a wide class of linear boundary-value problems for systems of $r$-th order ordinary differential equations whose solutions range over the normed complex space $(C^{(n)})^m$ of $n\geq r$ times continuously differentiable functions $y:[a,b]\to\mathbb{C}^{m}$. The boundary conditions for these problems are of the most general form $By=q$, where $B$ is an arbitrary continuous linear operator from $(C^{(n)})^{m}$ to $\mathbb{C}^{rm}$. We prove that the solutions to the considered problems  can be approximated in $(C^{(n)})^m$ by solutions to some multipoint boundary-value problems. The latter problems do not depend on the right-hand sides of the considered problem and are built explicitly.
\end{abstract}

\maketitle

\section{Introduction}\label{sec_Int}

Questions about justification of passage to the limit in differential systems arise in various mathematical and applied problems. These questions are best investigated for the Cauchy problem for systems of first-order ordinary differential equations (ODEs); see, e.g. \cite{Gikhman1952, KrasnoselskiiKrein1955, KurzweilVorel1957CMJ, Levin1967dan, Opial1967, Reid1967}. Parameter-dependent boundary-value problems are far less investigated than the Cauchy problem due to the  wide variety of boundary conditions. Pioneer results on this topic are obtained by Kiguradze \cite{Kiguradze1987, Kiguradze1975, Kiguradze2003} and Ashordia \cite{Ashordia1996}, who introduced and investigated a class of general linear boundary-value problems for systems of first-order ODEs. Kiguradze and Ashordia found sufficient conditions under which the solutions to these problems are continuous with respect to the parameter in the normed space $C([a,b],\mathbb R^m)$, with $m$ being the number of equations in the system. Recently Mikhailets and his disciples \cite{GnypKodlyukMikhailets2015UMJ, Gnyp2016, GnypMikhailetsMurach2017, KodliukMikhailets2013JMS, MikhailetsMurachSoldatov2016MFAT, MikhailetsMurachSoldatov2016EJQTDE, MikhailetsReva2008DAN8, Soldatov2015UMJ} introduced and investigated the broadest classes of linear boundary-value problems for systems of $r$-th order ODEs whose solutions range over chosen normed function spaces such as spaces of $n\geq r$ times continuously differentiable functions, H\"older spaces, Sobolev spaces and some others. Boundary conditions for these problems are posed in the most general form $By=q$ where $B$ is an arbitrary linear continuous operator acting from the chosen space to $\mathbb{C}^{rm}$. Such conditions cover all the known types of boundary conditions. These problems are called generic with respect to the chosen space. Constructive criteria for the solutions of these problems to be continuous in the chosen space with respect to the parameter are found.

Among important examples of the above-mentioned problems are multipoint boundary-value problems. The latter are simpler with respect to the structure of the boundary operator and to the finding of their solutions as compared with the problems that contain integral boundary conditions, for example. We therefore ask whether it is possible to approximate solutions of the above-mentioned boundary problems with solutions to some multipoint boundary problems. The purpose of this paper is to substantiate a positive answer to this question for  boundary problems that are generic with respect to the space $(C^{(n)})^{m}:=C^{(n)}([a,b],\mathbb C^{m})$ where $n\geq r$. Of course, this question relates closely to the interpolation of functions.
Our reasoning relies on the fact that the linear span of all continuous linear functionals on $C([a,b])$ which correspond to Dirac measures supported at single points is sequentially dense in the dual of $C([a,b])$ with respect to the $w^*$-topology. We will also give a constructive proof of this fact.


\section{Main results}

Let $a,b\in\mathbb{R}$ satisfy $a<b$. As usual, $C^{(l)}:=C^{(l)}([a,b],\mathbb{C})$, with $l\in\mathbb{N}$, 
 stands
for the Banach space of $l$ times continuously differentiable complex-valued functions on $[a,b]$. This space is endowed with the norm
\begin{equation*}
\|x\|_{(l)}:=\sum_{j=0}^{l}\max_{a\leq t\leq b}|x^{(j)}(t)|
\end{equation*}
and induces the Banach spaces
\begin{equation*}
(C^{(l)})^{m}:=C^{(l)}([a,b],\mathbb{C}^{m})\quad\mbox{and}\quad
(C^{(l)})^{m\times m}:=C^{(l)}([a,b],\mathbb{C}^{m\times m}).
\end{equation*}
They consist of all vector-valued functions or $(m\times m)$-matrix-valued functions whose components belong to $C^{(l)}$. The norms in these spaces are equal to the sum of the norms in $C^{(l)}$ of all components of the vector-valued or matrix-valued functions and will be denoted by $\|\cdot\|_{(l)}$ as well.

We arbitrarily choose integers $n$, $r$, and $m$ such that $n\geq r\geq1$ and $m\geq1$. On $[a,b]$, we consider a boundary-value problem of the form
\begin{align}\label{2MSp.syste}
y^{(r)}(t)+\sum_{l=1}^r A_{r-l}(t)y^{(r-l)}(t)&=f(t)
\quad\mbox{whenever}\quad a\leq t\leq b,\\
By&=q. \label{2MSp.kue}
\end{align}
Here, the unknown vector-valued function $y$ is supposed to be in $(C^{(n)})^{m}$, whereas the matrix-valued functions $A_{r-l}\in (C^{(n-r)})^{m\times m}$, with $1\leq l\leq r$, vector-valued function $f\in (C^{(n-r)})^{m}$, vector $q\in \mathbb C^{rm}$, and continuous linear operator
\begin{equation}\label{2MSp.oper}
B:(C^{(n)})^{m}\rightarrow \mathbb C^{rm}
\end{equation}
are arbitrarily given. Following \cite{MikhailetsMurachSoldatov2016MFAT}, we say that the boundary-value problem \eqref{2MSp.syste}, \eqref{2MSp.kue} is generic with respect to the space $C^{(n)}$.

We suppose this problem to have a unique solution $y\in(C^{(n)})^m$ for all $f\in (C^{(n-r)})^{m}$ and $q\in \mathbb C^{rm}$. According to \cite[Theorem~2]{Soldatov2015UMJ}, this is equivalent to the fact that the corresponding homogeneous problem (with $f(\cdot)\equiv0$ and $q=0$) has only the zero solution $y(\cdot)\equiv0$.

Let $\mathcal{X}$ be a dense subset of $(C^{(n-r)})^{m\times m}$.
Consider a sequence of multipoint bound\-ary-value problems of the form
\begin{align}\label{3MSp.syste}
L_{k}y_k(t):=y_k^{(r)}(t)+\sum_{l=1}^r A_{r-l,k}(t)y^{(r-l)}_{k}(t)&=f(t)
\quad\mbox{whenever}\quad a\leq t\leq b,
\\B_{k}y_k:=\sum\limits_{j=1}^{p_k}\sum\limits_{l=0}^{n}
\beta_k^{j,l}\,y^{(l)}_k(t_{k,j})&=q. \label{MSp.2B_m}
\end{align}
They are parameterized with the integer $k\geq 1$, and their right-hand sides are the same as those of the problem \eqref{2MSp.syste}, \eqref{2MSp.kue}. Here, each $A_{r-l,k}\in \mathcal{X}$. Besides, we suppose that $p_k\in \mathbb{N}$ 
 and $\beta_k^{j,l}\in \mathbb{C}^{rm \times m}$ and $t_{k,j} \in [a,b]$ for all admissible values of $k$, $j$, and~$l$. The solutions $y_k$ are considered in the class $(C^{(n)})^{m}$. Evidently, the linear operator $B_k$ acts continuously from  $(C^{(n)})^{m}$ to $\mathbb C^{rm}$.  Thus, every problem \eqref{3MSp.syste}, \eqref{MSp.2B_m} is also generic with respect to the space $C^{(n)}$.

\begin{theorem}\label{MSp.2apr_thm}
For the boundary-value problem \eqref{2MSp.syste}, \eqref{2MSp.kue},  there exists a sequence of multipoint boundary-value problems of the form \eqref{3MSp.syste}, \eqref{MSp.2B_m} such that they are uniquely solvable whenever $k\gg1$ and that
\begin{equation}\label{MSp.2granum}
y_{k}\to y \;\;\mbox{in}\;\; (C^{(n)})^m\;\;\mbox{as}\;\;k\rightarrow\infty.
\end{equation}
This sequence can be chosen not depending on $f$ and $q$ and can be  built explicitly.
\end{theorem}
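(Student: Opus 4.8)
The plan is to reduce everything to an approximation statement about the single operator $B$ and then use a perturbation argument. Since $B:(C^{(n)})^m\to\mathbb C^{rm}$ is continuous linear, writing $y=(y_1,\dots,y_m)$ with each $y_i\in C^{(n)}$, the components of $By$ are continuous linear functionals on $(C^{(n)})^m$. Each such functional restricts on each coordinate to a continuous linear functional on $C^{(n)}$; differentiating $n$ times identifies $C^{(n)}$ with a closed subspace of $C\times\cdots\times C$ (the map $x\mapsto(x(a),x'(a),\dots,x^{(n-1)}(a),x^{(n)})$, say), so by Hahn–Banach every functional on $C^{(n)}$ extends to a functional on a product of copies of $C([a,b])$, hence is given by a finite collection of Riesz (complex Borel) measures paired against $x,x',\dots,x^{(n)}$. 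Now invoke the fact announced in the introduction: the span of Dirac measures is sequentially $w^*$-dense in $C([a,b])^*$. Approximating each of these measures by finite linear combinations of point masses and collecting terms, I obtain a sequence of operators $B_k$ of the multipoint form \eqref{MSp.2B_m}, with explicitly constructed points $t_{k,j}$ and matrices $\beta_k^{j,l}$, such that $B_k\to B$ pointwise on $(C^{(n)})^m$, i.e. $B_ky\to By$ for every fixed $y\in(C^{(n)})^m$. Simultaneously, since $\mathcal X$ is dense in $(C^{(n-r)})^{m\times m}$, choose $A_{r-l,k}\in\mathcal X$ with $A_{r-l,k}\to A_{r-l}$ in $(C^{(n-r)})^{m\times m}$, so that $L_k\to L$ in the operator norm from $(C^{(n)})^m$ to $(C^{(n-r)})^m$ (this is immediate from the product rule and boundedness of multiplication $C^{(n-r)}\times C^{(n)}\to C^{(n-r)}$).

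Next I set up the abstract framework in the spirit of \cite{MikhailetsMurachSoldatov2016MFAT}/\cite{Soldatov2015UMJ}: the linear continuous operator $(L,B):(C^{(n)})^m\to(C^{(n-r)})^m\times\mathbb C^{rm}$ is bijective by hypothesis, hence a topological isomorphism by the Banach theorem. The operators $(L_k,B_k)$ converge to $(L,B)$ on each fixed element $y\in(C^{(n)})^m$. I claim this \emph{strong} convergence, together with invertibility of the limit, suffices: for $k\gg1$ the $(L_k,B_k)$ are invertible and their inverses converge strongly to $(L,B)^{-1}$. This is the standard fact that strong convergence $T_k\to T$ with $T$ invertible and, say, a uniform bound available, gives $T_k^{-1}\to T^{-1}$ strongly — but strong convergence alone does not automatically give the uniform bound, so I will instead exploit the concrete structure. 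Writing $Y(t)$ for the fundamental matrix of the limit system $L$ and $Y_k(t)$ for that of $L_k$, continuous dependence of solutions of linear ODE systems on their coefficients (in $C^{(n-r)}$) gives $Y_k\to Y$ in $(C^{(n)})^{m\times m}$ and likewise for the particular solution $\widetilde y_k\to\widetilde y$ of $L_k y=f$ with zero initial data. The general solution of \eqref{3MSp.syste} is $y_k=\widetilde y_k+Y_kc_k$, $c_k\in\mathbb C^{rm}$, and the boundary condition becomes $(B_kY_k)c_k=q-B_k\widetilde y_k$. The matrix $B_kY_k\to BY$ because $Y_k\to Y$ in $(C^{(n)})^m$ componentwise and $B_k\to B$ strongly with a uniform bound on $B_k$ over the relevant compact set (the $B_k$ are built from fixed approximations and are uniformly bounded on norm-bounded sets, so on $\{Y_k\}$); and $BY$ is invertible exactly because $(L,B)$ is bijective. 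Hence $B_kY_k$ is invertible for $k\gg1$, which gives unique solvability of \eqref{3MSp.syste}, \eqref{MSp.2B_m}, and $c_k=(B_kY_k)^{-1}(q-B_k\widetilde y_k)\to(BY)^{-1}(q-B\widetilde y)=c$, whence $y_k=\widetilde y_k+Y_kc_k\to\widetilde y+Yc=y$ in $(C^{(n)})^m$, which is \eqref{MSp.2granum}.

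The explicitness and the independence of the construction from $f$ and $q$ come for free: the points $t_{k,j}$, the coefficients $\beta_k^{j,l}$, and the matrices $A_{r-l,k}$ are produced from $B$, from $\mathcal X$, and from the $w^*$-approximation of measures by point masses, none of which involve $f$ or $q$; the right-hand sides are simply copied over. I would therefore organize the write-up as: (i) a lemma giving the constructive sequential $w^*$-density of Dirac masses in $C([a,b])^*$; (ii) a lemma representing an arbitrary $B$ as in \eqref{2MSp.oper} via measures and deducing the existence of multipoint $B_k\to B$ strongly with uniformly bounded norms on bounded sets; (iii) the ODE continuous-dependence step; (iv) the assembly via the fundamental matrix as above.

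The main obstacle I anticipate is step (ii) — more precisely, upgrading "$B_k y\to By$ for each fixed $y$" to the uniform control needed to pass to the limit in $B_kY_k$ and in $B_k\widetilde y_k$ where the \emph{argument} also varies with $k$. Pointwise $w^*$-convergence of measures does not give norm convergence, and the approximating combinations of Dirac masses can have large total variation, so a naive estimate of $\|B_k\|$ may blow up. The fix is to be careful in the construction of the interpolation quadrature: choose the $w^*$-approximants of each representing measure to have uniformly bounded total variation (this is possible — e.g. via positive quadrature formulas / Bernstein-type or Riemann-sum approximations on the support, which reproduce the measure as a weak limit while keeping the masses controlled by the variation of the target measure), so that $\sup_k\|B_k\|_{(C^{(n)})^m\to\mathbb C^{rm}}<\infty$. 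With that uniform bound in hand, $B_k z_k\to Bz$ whenever $z_k\to z$ in $(C^{(n)})^m$, by the usual $\|B_kz_k-Bz\|\le\|B_k\|\,\|z_k-z\|+\|B_kz-Bz\|$ splitting, and the rest of the argument goes through verbatim.
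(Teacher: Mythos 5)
Your construction of the multipoint operators $B_k$ is essentially the paper's: represent $B$ through point evaluations at $a$ plus Riemann--Stieltjes integrals against $\mathrm{NBV}$ functions, approximate those by finite combinations of Dirac masses whose total variations stay uniformly bounded, and choose $A_{r-l,k}\in\mathcal X$ by density. The technical point you flag at the end --- that pointwise $w^*$-approximation by point masses is worthless here unless the approximants have uniformly bounded variation --- is exactly what the paper isolates as condition \eqref{MSp.ii} in Theorem~\ref{th_approx} and then converts into convergence of the functionals on all of $C$ via Helly's theorem; so your ``fix'' is the paper's Lemmas \ref{lem_approx_nondecreasing} and \ref{lem_approx_jumps}. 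Where you genuinely diverge is the perturbation step: the paper simply cites the Main Theorem of \cite{MikhailetsMurachSoldatov2016MFAT}, which says that \eqref{4aut.eq.1} and \eqref{4aut.eq.2} alone already yield unique solvability for $k\gg1$ and $y_k\to y$, whereas you re-derive this via the fundamental matrix: $y_k=\widetilde y_k+Y_kc_k$, continuous dependence of $Y_k$ and $\widetilde y_k$ on the coefficients, and invertibility of the characteristic matrix $B_kY_k\to BY$ (an $rm\times rm$ matrix, not $m\times m$ as you wrote --- a harmless slip). Your route is more self-contained and makes transparent why strong (rather than norm) convergence of $B_k$ suffices --- everything funnels through a finite-dimensional matrix --- at the cost of redoing the continuity-in-parameter machinery; the paper's route is shorter and reuses it. One small simplification available to you: once $B_ky\to By$ holds for \emph{every} $y\in(C^{(n)})^m$, the Banach--Steinhaus theorem gives $\sup_k\|B_k\|<\infty$ for free (this is how the paper argues in the proof of Remark~\ref{4uat.rem_1}), so the uniform variation bound is needed only to establish the strong convergence itself, not as an additional separate input to the limit passage in $B_kY_k$.
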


\section{Preliminaries}

Our proof of Theorem \ref{MSp.2apr_thm} uses two results, which are of independent interest and will be given in this section.

We let $\mathrm{NBV}:=\mathrm{NBV}([a,b],\mathbb{C})$ denote the complex linear space of those functions $g:[a,b]\to\mathbb{C}$ that are of bounded variation on $[a,b]$, are left-continuous on $(a,b)$, and satisfy $g(a)=0$. This space is endowed with the norm which is equal to the total variation $\operatorname{V}(g,[a,b])$ of $g$ on $[a,b]$. The space $\mathrm{NBV}$ is complete, nonseparable, and nonreflexive with respect to this norm \cite[Chapter~IV, Sections 12 and 15]{DanfordShvarts1958}.

According to F.~Riesz's theorem, every continuous linear functional $\ell$ on the complex Banach space $C=C^{(0)}([a,b],\mathbb{C})$ admits the unique representation
\begin{equation}\label{l}
\ell(x) = \int\limits_a^b x(t)dg(t)\quad\mbox{whenever}\quad x\in C
\end{equation}
for some function $g\in \mathrm{NBV}$. Moreover, the corresponding mapping  $\mathcal{I}:g\mapsto \ell$ sets an isometric isomorphism $\mathcal{I}\colon\mathrm{NBV}\leftrightarrow C^\prime$ \cite[Chapter~IV, Section~13, Exercise~35]{DanfordShvarts1958}. Here, $C^\prime$ denotes the dual of $C$, we interpreting  $C^\prime$ as a Banach space endowed with the norm of functionals. The integral in \eqref{l} is understood in the Riemann--Stieltjes sense or as the Lebesgue--Stieltjes integral with respect to the complex-valued measure generated by $g$.

The space $\mathrm{NBV}$ is isometrically isomorphic to the Banach space of all complex-valued measures on $[a,b]$ (the norm in the latter space is the total variation of the measure) \cite[Chapter~IV, Section~12]{DanfordShvarts1958}. The Dirac measures supported at single points are the simplest examples of these measures. We let  $\chi_{\gamma}$ denote the characteristic function (indicator) of a subset $\gamma$ of $[a,b]$ and note that the characteristic functions
$\chi_{(c,b]}$, with $a\leq c\leq b$, and $\chi_{\{b\}}$ correspond to such Dirac measures under this isometric isomorphism.
Let $S:=S([a,b],\mathbb{C})$ denote the complex linear span of these characteristic functions. The set $\mathcal{I}(S)$ is dense in $C^\prime$ with respect to the $w^*$-topology \cite[Section IV.5, p.~114]{ReedSimon}. However, this topology is not metrizable. Hence, the indicated density does not imply that $\mathcal{I}(S)$ is sequentially dense in $C^\prime$ with respect to this topology. In fact, the latter density holds true, which follows from the next two results.

\begin{theorem}\label{th_approx}
Let $g\in
\mathrm{NBV}$. Then there exists a sequence $(g_k)^\infty_{k=1}$ of functions $g_k\in S$ such that
\begin{equation}\label{MSp.granum}
\sup_{a\leq t \leq b}|g_{k}(t)-g(t)|\rightarrow0 \quad \mbox{as}\quad k\to\infty,
\end{equation}
and
\begin{equation}\label{MSp.ii}
\sup_{k\geq1}\operatorname{V}(g_{k},[a,b])<\infty.
\end{equation}
\end{theorem}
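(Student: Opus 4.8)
The plan is to approximate $g$ by explicit step functions attached to partitions of $[a,b]$ adapted to $g$, using that a function of bounded variation is regulated. First I would record that $g$ possesses one-sided limits $g(t-)$, $g(t+)$ at every point of $[a,b]$ --- in particular $g(b-):=\lim_{t\to b^-}g(t)$ exists --- and deduce the standard consequence that for every $\varepsilon>0$ there is a partition $a=s_0<s_1<\dots<s_N=b$ such that the oscillation $\sup\{|g(u)-g(v)|:u,v\in(s_{i-1},s_i)\}$ of $g$ on each open subinterval is less than $\varepsilon$. A constructive choice of the $s_i$ can be made via the nondecreasing bounded function $v(t):=\operatorname{V}(g,[a,t])$: since $|g(u)-g(v)|\le v(u)-v(v)$ for $v<u$, it suffices to pick the $s_i$ greedily so that $v$ grows by less than $\varepsilon$ (in the appropriate one-sided sense) from one partition point to the next, a process that terminates because $v$ is bounded.

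Applying this with $\varepsilon=1/k$ yields a partition $a=t_{k,0}<t_{k,1}<\dots<t_{k,N_k}=b$, and I would define $g_k$ to be the step function equal to $0$ at $a$, equal to $g(t_{k,i})$ on $(t_{k,i-1},t_{k,i}]$ for $1\le i\le N_k-1$, equal to $g(b-)$ on $(t_{k,N_k-1},b)$, and equal to $g(b)$ at the single point $b$. Since $\chi_{(c_1,c_2]}=\chi_{(c_1,b]}-\chi_{(c_2,b]}$, this $g_k$ is a finite linear combination of the functions $\chi_{(c,b]}$ and $\chi_{\{b\}}$, with coefficients written down explicitly from $g$ and the partition; hence $g_k\in S$, and the construction manifestly does not use anything beyond $g$ itself.

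It then remains to verify \eqref{MSp.granum} and \eqref{MSp.ii}. For \eqref{MSp.granum}: on $(t_{k,i-1},t_{k,i}]$ with $i\le N_k-1$ one has $|g_k(t)-g(t)|=|g(t_{k,i})-g(t)|\le 1/k$, because the oscillation of $g$ on $(t_{k,i-1},t_{k,i})$ is less than $1/k$ and, $t_{k,i}$ being an interior point, left-continuity gives $g(t_{k,i})=g(t_{k,i}-)$, a limit of values $g(u)$ with $u$ in that open interval; the interval $(t_{k,N_k-1},b)$ is treated the same way using $g(b-)$, and at $t=a$ and $t=b$ one has $g_k=g$ exactly. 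For \eqref{MSp.ii}: $g_k$ jumps only at partition points, so, writing $t_{k,0}=a$ and using $g(a)=0$, $\operatorname{V}(g_k,[a,b])=\sum_{i=1}^{N_k-1}|g(t_{k,i})-g(t_{k,i-1})|+|g(b-)-g(t_{k,N_k-1})|+|g(b)-g(b-)|$; letting $t\to b^-$ in the variation sum of $g$ over the partition $t_{k,0}<\dots<t_{k,N_k-1}<t$ bounds the first two groups of terms by $\operatorname{V}(g,[a,b])$, and the last term is at most $\operatorname{V}(g,[a,b])$ as well, so $\operatorname{V}(g_k,[a,b])\le 2\operatorname{V}(g,[a,b])$ uniformly in $k$.

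The step demanding the most care --- and the reason why $S$ must contain $\chi_{\{b\}}$ --- is the behaviour at the right endpoint: a function in $\mathrm{NBV}$ need not be left-continuous at $b$, so $g$ may jump there, and a step function sampling $g(b)$ on the last subinterval would fail to approximate $g(b-)$ on that interval; peeling the jump off by the term $(g(b)-g(b-))\chi_{\{b\}}$ repairs this without spoiling the variation bound. Everything else is elementary once the adapted partition is available.
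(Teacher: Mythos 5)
Your proof is correct, but it follows a genuinely different route from the paper's. The paper first reduces to a real-valued increasing $g$ (Jordan decomposition of the real and imaginary parts), then splits such a $g$ into an increasing continuous part and an increasing jump part, and treats the two cases in separate lemmas (Lemmas \ref{lem_approx_nondecreasing} and \ref{lem_approx_jumps}): for the continuous part it partitions $[a,b]$ by the level sets $t_{k,s}=\min\{t:g(t)=sg(b)/(k+1)\}$, and for the jump part it truncates the series of jumps. You avoid every decomposition and work directly with an arbitrary complex $g\in\mathrm{NBV}$, exploiting that a function of bounded variation is regulated: a partition on whose open subintervals $g$ oscillates by at most $1/k$, obtained greedily from $v(t)=\operatorname{V}(g,[a,t])$, lets you sample $g$ at the right endpoints of the subintervals and invoke left-continuity. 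What the paper's route buys is the extreme simplicity of each individual lemma and, for increasing $g$, increasing approximants with $\operatorname{V}(g_k,[a,b])\le\operatorname{V}(g,[a,b])$; the price is the preliminary decompositions (the continuous-plus-jump splitting in particular requires enumerating the discontinuities of $g$). What your route buys is a single uniform construction depending only on the variation function, at the cost of the constant $2\operatorname{V}(g,[a,b])$ in \eqref{MSp.ii} (a slightly more careful accounting, absorbing the terminal jump $|g(b)-g(b-)|$ into the limit of the variation sums over partitions ending at $t<b$, recovers $\operatorname{V}(g,[a,b])$). Your observation about the role of $\chi_{\{b\}}$ --- that $g$ need not be left-continuous at $b$, so the jump there must be peeled off separately --- is apt. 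The only step you should write out in full is the greedy selection of the partition points: the one-sided limits $v(t\pm)$ have to be handled precisely to guarantee both that each subinterval is nondegenerate and that the process terminates, since $v$ may itself have jumps of size exceeding $1/k$.
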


Theorem~\ref{th_approx} and Helly's theorem \cite[Chapter~IV, Section~16, p.~391]{DanfordShvarts1958} immediately yield the following result:

\begin{theorem}\label{th_dense}
The set $\mathcal{I}(S)$ is sequentially dense in the space $C^\prime$ with respect to the $w^*$-topology; i.e.,
for every functional  $\ell \in C^\prime$ there exists a sequence $(g_k)^\infty_{k=1}\subset S$ such that
\begin{equation}\label{4aut.t3}
   \lim_{k\to \infty}\int\limits_a^b x(t)dg_k(t)= \ell(x) \quad \mbox{for every} \quad x\in C.
\end{equation}
\end{theorem}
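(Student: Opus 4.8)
The plan is to combine F.~Riesz's representation theorem with Theorem~\ref{th_approx} and then invoke the classical limit theorem for Riemann--Stieltjes integrals (Helly's second theorem), exactly as the sentence preceding the statement advertises. First I would fix an arbitrary $\ell\in C'$ and use Riesz's theorem, recalled above, to write $\ell(x)=\int_a^b x(t)\,dg(t)$ for the unique $g\in\mathrm{NBV}$ with $\mathcal{I}(g)=\ell$. Applying Theorem~\ref{th_approx} to this particular $g$ yields a sequence $(g_k)_{k=1}^{\infty}\subset S$ such that $g_k\to g$ uniformly on $[a,b]$ and $\sup_k\operatorname{V}(g_k,[a,b])<\infty$.

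Next I would check the hypotheses needed to pass to the limit under the Stieltjes integral sign. The uniform convergence in \eqref{MSp.granum} gives pointwise convergence $g_k(t)\to g(t)$ at every $t\in[a,b]$, and the sequence $(g_k)$ is uniformly bounded: indeed $|g_k(t)|=|g_k(t)-g_k(a)|\le\operatorname{V}(g_k,[a,b])$ because each $g_k\in S$ vanishes at $a$, and the right-hand side is bounded uniformly in $k$ by \eqref{MSp.ii} (alternatively, $|g_k(t)|\le|g_k(t)-g(t)|+|g(t)|$ with the first term tending to $0$ uniformly and $g$ bounded). Thus $(g_k)$ is a uniformly bounded sequence of functions of uniformly bounded variation converging pointwise to $g$ on the whole interval $[a,b]$.

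Then Helly's theorem \cite[Chapter~IV, Section~16, p.~391]{DanfordShvarts1958} applies and gives, for each fixed $x\in C$,
\begin{equation*}
\int\limits_a^b x(t)\,dg_k(t)\longrightarrow\int\limits_a^b x(t)\,dg(t)=\ell(x)\qquad\mbox{as}\quad k\to\infty,
\end{equation*}
which is precisely \eqref{4aut.t3}. Since every functional $x\mapsto\int_a^b x(t)\,dg_k(t)$ belongs to $\mathcal{I}(S)$, this exhibits $\ell$ as a $w^*$-limit of a sequence from $\mathcal{I}(S)$, proving that $\mathcal{I}(S)$ is sequentially dense in $C'$ with respect to the $w^*$-topology.

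I do not expect a genuine obstacle at this stage: all the analytic substance is carried by Theorem~\ref{th_approx} (whose proof is presumably the real content of the preliminaries), while the argument above is a routine chaining of the Riesz isomorphism and the Helly limit theorem. The only two points deserving a sentence of care are the verification that $(g_k)$ is uniformly bounded (so that Helly's theorem genuinely applies) and the observation that the convergence $g_k\to g$ supplied by Theorem~\ref{th_approx} is uniform, hence holds at every point of $[a,b]$, so one need not worry about exceptional discontinuity points of $g$ when quoting the Helly limit theorem.
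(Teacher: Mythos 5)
Your proposal is correct and follows exactly the route the paper takes: the paper derives Theorem~\ref{th_dense} immediately from Theorem~\ref{th_approx} together with Helly's limit theorem, via the Riesz representation of $\ell$. The extra sentences you add verifying uniform boundedness of $(g_k)$ are sound and only make explicit what the paper leaves implicit.
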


\noindent\emph{{Proof of Theorem~\textup{\ref{th_approx}}.}}
Every function $g\in\mathrm{NBV}$ is a linear combination of two real-valued functions belonging to $\mathrm{NBV}$, whereas every real-valued function from $\mathrm{NBV}$ is the difference of two increasing functions belonging to $\mathrm{NBV}$. (We interpret increasing of functions in the nonstrict sense.) This follows plainly from the corresponding properties of functions of bounded variation.
We can thus assume without loss of generality that the function $g\in\mathrm{NBV}$ is real-valued and increasing on $[a,b]$.

Now we give the proof of Theorem~\ref{th_approx} in the form of two lemmas.

\begin{lemma}\label{lem_approx_nondecreasing}
For every increasing continuous function $g\in \mathrm{NBV}$, there exists a sequence $(g_k)_{k=1}^\infty$ of increasing functions $g_k\in S$ that satisfies conditions \eqref{MSp.granum} and \eqref{MSp.ii}.
\end{lemma}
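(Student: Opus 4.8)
The plan is to approximate $g$ by step functions built from finitely many jumps, placing the jumps on a mesh whose size tends to zero. Fix $k\geq1$ and let $a=s_0<s_1<\dots<s_k=b$ be the uniform partition of $[a,b]$ into $k$ subintervals, so that $s_i=a+i(b-a)/k$. Define
\begin{equation}\label{MSp.gk_def}
g_k(t):=\sum_{i=1}^{k}\bigl(g(s_i)-g(s_{i-1})\bigr)\,\chi_{(s_{i-1},b]}(t)
\quad\text{for}\quad a\leq t\leq b.
\end{equation}
Since each $\chi_{(s_{i-1},b]}\in S$ and the coefficients $g(s_i)-g(s_{i-1})$ are nonnegative (as $g$ is increasing), $g_k$ is a finite nonnegative linear combination of the generating characteristic functions, hence $g_k\in S$ and $g_k$ is itself an increasing function with $g_k(a)=0$. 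A direct telescoping computation shows that on each subinterval $g_k$ is constant with $g_k(t)=g(s_i)$ for $t\in(s_{i-1},s_i]$ (and $g_k(a)=0=g(a)$), so $g_k$ is exactly the right-continuous-at-the-mesh step approximation of $g$.

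Next I would verify the two required estimates. For the total variation bound \eqref{MSp.ii}: since $g_k$ is increasing, $\operatorname{V}(g_k,[a,b])=g_k(b)-g_k(a)=g(b)-g(a)$, which is a finite constant independent of $k$; thus \eqref{MSp.ii} holds with equality for the supremum. For the uniform convergence \eqref{MSp.granum}: for $t\in(s_{i-1},s_i]$ we have $|g_k(t)-g(t)|=|g(s_i)-g(t)|\le g(s_i)-g(s_{i-1})$ because $g$ is increasing and $s_{i-1}<t\le s_i$; and at $t=a$ the difference is zero. Hence
\begin{equation}\label{MSp.unif_est}
\sup_{a\leq t\leq b}|g_k(t)-g(t)|\le \max_{1\leq i\leq k}\bigl(g(s_i)-g(s_{i-1})\bigr).
\end{equation}
Here the continuity of $g$ enters decisively: a continuous function on the compact interval $[a,b]$ is uniformly continuous, so given $\varepsilon>0$ there is $\delta>0$ with $|g(s)-g(t)|<\varepsilon$ whenever $|s-t|<\delta$; once $(b-a)/k<\delta$ the right-hand side of \eqref{MSp.unif_est} is at most $\varepsilon$, giving \eqref{MSp.granum}.

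The argument above is essentially routine; the only place where care is genuinely needed is the hypothesis that $g$ is \emph{continuous}, which is exactly what makes \eqref{MSp.granum} work through uniform continuity. For a general increasing $g\in\mathrm{NBV}$ with jump discontinuities this step approximation would not converge uniformly near the jumps, which is why the decomposition of $g$ into a continuous part and a (countable) jump part must be handled separately — and that is presumably the content of the second lemma into which the authors split the proof of Theorem~\ref{th_approx}. So the main obstacle here is not in this lemma at all but in the complementary case; within the continuous case the delicate point is simply to make the mesh fine enough relative to the modulus of continuity of $g$ while keeping the variation bounded, and the monotonicity of $g$ hands us the variation bound for free.
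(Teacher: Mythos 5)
Your proof is correct, and it differs from the paper's only in where the partition is placed. You equipartition the \emph{domain} $[a,b]$ into $k$ equal subintervals and take the step function with value $g(s_i)$ on $(s_{i-1},s_i]$; the error is then bounded by the largest oscillation of $g$ over a subinterval, and you must invoke uniform continuity of $g$ on the compact interval to send this to zero. The paper instead equipartitions the \emph{range}: it sets $t_{k,s}:=\min\{t:g(t)=sg(b)/(k+1)\}$ (these points exist by the intermediate value theorem and continuity) and lets $g_k$ jump by $g(b)/(k+1)$ at each $t_{k,s}$, which yields the explicit uniform error $g(b)/(k+1)$ with no appeal to a modulus of continuity. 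Both constructions give increasing step functions in $S$ whose total variation is at most $g(b)$ by monotonicity, so both verify \eqref{MSp.granum} and \eqref{MSp.ii}; the paper's variant buys an explicit convergence rate independent of how flat or steep $g$ is, while yours is the more standard Riemann-sum-style argument. All the individual steps you state (the telescoping identity $g_k(t)=g(s_i)$ on $(s_{i-1},s_i]$, the bound $|g_k(t)-g(t)|\le g(s_i)-g(s_{i-1})$ via monotonicity, and $\operatorname{V}(g_k,[a,b])=g(b)-g(a)$) check out, so there is no gap.
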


\begin{proof}
Let a function $g\in \mathrm{NBV}$ be increasing and continuous. Choosing an integer $k\geq1$ arbitrarily, we put
\begin{equation}\label{4a.part}
t_{k,s} := \min \left\lbrace t\in[a,b] : g(t) = \frac{s g(b)}{k+1}\right\rbrace \quad \mbox{for each}\quad s\in \mathbb{Z}\cap[1,k].
\end{equation}
Considering the partition of $[a,b]$ with the points \eqref{4a.part}, we build the following increasing step function:
$$
g_k(t) :=
	\left\lbrace
	\begin{array}{cll}
		0 & \mbox{if}\quad t\in [a,t_{k,1}],\smallskip\\
\frac{sg(b)}{k+1} & \mbox{if}\quad t\in (t_{k,s}, t_{k,s+1}]\;\;
\mbox{for some}\;\; s\in\mathbb{Z}\cap[1,k-1],\smallskip\\
\frac{k g(b)}{k+1} & \mbox{if}\quad t\in (t_{k,k}, b].
	\end{array}
	\right.
$$
We see that
$$
\operatorname{V}(g_k,[a,b]) = g_k(b) < g(b)
$$
and that
$$
\sup_{a\leq t \leq b}|g_{k}(t)-g(t)|=\frac{g(b)}{k+1}\rightarrow0 \quad \mbox{as}\quad k\to\infty,
$$
which is what had to be proved.
\end{proof}

\begin{lemma}\label{lem_approx_jumps}
For every increasing jump function $g\in \mathrm{NBV}$, there exists a sequence $(g_k)_{k=1}^\infty$ of increasing jump functions $g_k\in S$ that satisfies conditions \eqref{MSp.granum} and \eqref{MSp.ii}.
\end{lemma}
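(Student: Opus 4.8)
The plan is to reduce the general increasing jump function to a finite sum of scaled Heaviside-type jumps, approximate each one by an element of $S$ (which it already essentially is), and control the error uniformly. First I would recall that an increasing jump function $g\in\mathrm{NBV}$ is determined by an at most countable set of jump points $\{s_i\}_{i\geq1}\subset[a,b]$ with positive jump heights $h_i:=g(s_i{+})-g(s_i)\geq0$ (interpreting the one-sided limits with the left-continuity convention, so the jump at an interior point $s_i$ adds $h_i$ to the value on $(s_i,b]$) together with a possible jump at $b$ itself; since $g$ has bounded variation, $\sum_i h_i=g(b)<\infty$. Thus $g=\sum_{i\geq1} h_i\,\chi_{(s_i,b]}$ (with the convention that $\chi_{(b,b]}$ is replaced by $\chi_{\{b\}}$ when some $s_i=b$), the series converging uniformly on $[a,b]$ because the tails are bounded by $\sum_{i>N}h_i\to0$.

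The key step is then to set
\begin{equation*}
g_k(t):=\sum_{i=1}^{k} h_i\,\chi_{(s_i,b]}(t),
\end{equation*}
again with the understanding that $\chi_{(b,b]}$ is read as $\chi_{\{b\}}$. Each $g_k$ is a finite complex (here real, nonnegative) linear combination of the generating characteristic functions, hence $g_k\in S$; each $g_k$ is increasing as a sum of increasing functions with nonnegative coefficients; and each $g_k$ is itself a jump function. For the norm bound \eqref{MSp.ii} I would observe that $\operatorname{V}(g_k,[a,b])=g_k(b)=\sum_{i=1}^{k}h_i\leq g(b)$, so $\sup_k\operatorname{V}(g_k,[a,b])\leq g(b)<\infty$. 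For the uniform convergence \eqref{MSp.granum} I would estimate, for every $t\in[a,b]$,
\begin{equation*}
|g_k(t)-g(t)|=\Bigl|\sum_{i>k} h_i\,\chi_{(s_i,b]}(t)\Bigr|\leq\sum_{i>k}h_i,
\end{equation*}
and the right-hand side tends to $0$ as $k\to\infty$ since $\sum_i h_i$ converges; hence $\sup_{a\leq t\leq b}|g_k(t)-g(t)|\to0$.

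The main obstacle is essentially bookkeeping rather than analysis: one must make the decomposition $g=\sum_i h_i\chi_{(s_i,b]}$ precise and correct at the endpoints, in particular treating a possible jump at $t=b$ via $\chi_{\{b\}}$ rather than $\chi_{(b,b]}$, and verifying that the left-continuity normalization in $\mathrm{NBV}$ is consistent with placing the jump on $(s_i,b]$ (so that $g_k$ is genuinely left-continuous on $(a,b)$ and vanishes at $a$). Once the enumeration of jumps is fixed and the partial-sum functions are shown to lie in $S$, be increasing, and be jump functions, the two required estimates are immediate from $\sum_i h_i=g(b)<\infty$. Combining this lemma with Lemma~\ref{lem_approx_nondecreasing} via the Jordan-type decomposition of an increasing $\mathrm{NBV}$ function into its continuous part and its jump part — and noting that a sum of two approximating sequences satisfying \eqref{MSp.granum}, \eqref{MSp.ii} again satisfies them — completes the proof of Theorem~\ref{th_approx}.
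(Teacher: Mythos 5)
Your proof is correct and takes essentially the same route as the paper: both write $g$ as a (possibly infinite) sum of its jumps and take $g_k$ to be the $k$-th partial sum, with the uniform error bounded by the tail $\sum_{i>k}h_i$ and the variation bounded by $g(b)$. Your extra care with the endpoint convention (using $\chi_{\{b\}}$ for a jump at $b$) is a harmless refinement of the same argument.
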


\begin{proof}
 Let $g\in \mathrm{NBV}$ be an increasing jump function. This function admits the representation
$$
g(t) = \sum\limits_{s\in\omega\colon t_s<t} h_s \quad\mbox{whenever}\quad a\leq t\leq b,
$$
with $\{t_s| s\in\omega\}$ denoting the finite or countable sequence of points of discontinuity of $g$ and with $h_s > 0$ denoting the jump of $g$ at the point $t_s$.

If the set $\omega$ is finite, then $g\in S$, and the sequence $g_k:= g$, with $k\geq1$, is required. Examine now the case when $\omega$ is infinite but countable, i.e. $\omega=\mathbb{Z}_+$. Given an integer $k\geq1$, we consider the increasing jump function
$$
g_k(t):=\sum_{1\leq s\leq k,\; t_s<t} h_s \quad\mbox{whenever}\quad a\leq t\leq b.
$$
Then
$$
\sup_{a\leq t \leq b}|g_{k}(t)-g(t)|\leq  \sup_{a\leq t \leq b}\sum_{s> k,\; t_s<t} h_s \leq\sum_{s=k+1}^\infty h_s\rightarrow 0 \quad \mbox{as}\quad k\rightarrow\infty,
$$
which gives \eqref{MSp.granum}.
Condition \eqref{MSp.ii} is satisfied because $0 \leq g_k(t) \leq g(t) \leq g(b)$ for every $t\in[a,b]$.
\end{proof}

Now Theorem~\ref{th_approx} is a direct consequence of Lemmas \ref{lem_approx_nondecreasing} and \ref{lem_approx_jumps} because every increasing function $g\in\mathrm{NBV}$ can be represented as a sum of an increasing continuous function and an increasing jump function each of which belongs to  $\mathrm{NBV}$.

\section{Proofs of the main results}\label{sec_proof_t1}

\begin{proof}[Proof of Theorem\textup{~\ref{MSp.2apr_thm}}]

We will build a sequence of multipoint boundary-value problems \eqref{3MSp.syste}, \eqref{MSp.2B_m} such that
\begin{equation}\label{4aut.eq.1}
    \mathcal{X} \ni A_{r-l,k}\to A_{r-l}\quad\mbox{in}\quad (C^{(n-r)})^{m\times m}\quad\mbox{as}\quad k\to\infty\quad\mbox{whenever}\quad 1\leq l\leq r
\end{equation}
and that
\begin{equation}\label{4aut.eq.2}
    B_ky\to By\quad\mbox{in}\quad \mathbb{C}^{rm} \quad \mbox{for every}\quad y\in(C^{(n)})^{m}.
\end{equation}
According to \cite[p.~377, Main Theorem]{MikhailetsMurachSoldatov2016MFAT}, it follows from \eqref{4aut.eq.1} and \eqref{4aut.eq.2} that the problems \eqref{3MSp.syste}, \eqref{MSp.2B_m} are uniquely solvable whenever $k\gg1$ and that their solutions $y_k$ satisfy \eqref{MSp.2granum}.

Since the set $\mathcal{X}$ is dense in $(C^{(n-r)})^{m\times m}$ by our assumption, we choose sequences $(A_{r-l,k})_{k=1}^{\infty}$, with $1\leq l\leq r$, that satisfy \eqref{4aut.eq.1}. To build the required operators $B_k$, we use the unique representation of the continuous linear operator \eqref{2MSp.oper} in the form
\begin{equation*}
By=\sum_{l=0}^{n-1}\alpha_{l}\,y^{(l)}(a)+
\int\limits_a^b(d G(t))y^{(n)}(t)\quad\mbox{for every}\quad
y\in(C^{(n)})^{m}.
\end{equation*}
Here, all $\alpha_l \in \mathbb C^{rm \times m}$ are numeric matrices, and
$$
G(t)=(g^{\lambda,\mu}(t))_{\substack{\lambda=1,\ldots,rm \\\mu=1,\ldots,m}}
$$
is an $rm \times m$-matrix-valued function  such that all $g^{\lambda,\mu}\in\mathrm{NBV}$. (Certainly, the integral is understood in the Riemann--Stieltjes sense.)
This representation follows directly from the known description of the dual of  $C^{(r-1)}$  (see, e.g., \cite[Chapter~IV, Section~13, Exercise~36]{DanfordShvarts1958}). Given $\lambda$ and $\mu$, we define the functional $\ell_{\lambda,\mu}\in C^\prime$ by formula \eqref{l} in which we take $g(t)\equiv g^{\lambda,\mu}(t)$. According to Theorem~\ref{th_dense} there exists a sequence $(g_{k}^{\lambda,\mu})^\infty_{k=1}\subset S$ that satisfies \eqref{4aut.t3} in the case where $g_k=g_{k}^{\lambda,\mu}$. Hence,
\begin{equation*}
\lim_{k\to \infty}\int\limits_a^bz^{(n)}(t)dg_k^{\lambda,\mu}(t)= \int\limits_a^b z^{(n)}(t)dg^{\lambda,\mu}(t) \quad \mbox{for every}
\quad z\in C^{(n)}.
\end{equation*}
Since each $g_{k}^{\lambda,\mu}\in S$, the integral on the left is a linear combination of values of the function $z^{(n)}(t)$ at some points of $[a,b]$. Now, putting
\begin{equation*}
    B_ky:=\sum_{l=0}^{n-1} \alpha_{l}
y^{(l)}(a)+\int\limits_a^b\left(d (g_k^{\lambda,\mu}(t))_{\substack{\lambda=1,\ldots,rm \\\mu=1,\ldots,m}}\right)y^{(n)}(t)
\end{equation*}
for every $y\in (C^{(n)})^{m}$ and whenever $k\in\mathbb{N}$, 
we see that the continuous linear operators $B_k:(C^{(n)})^{m}\rightarrow \mathbb C^{rm}$ take the form \eqref{MSp.2B_m} and satisfies the required property~\eqref{4aut.eq.2}. It remains to note, that the functions $g_k^{\lambda,\mu}(t)$ and, hence, the operators $B_k$ do not depend on $f$ and $q$ and are build explicitly, as was shown in the proofs of Lemmas \ref{lem_approx_nondecreasing} and \ref{lem_approx_jumps}.
\end{proof}

Let \eqref{3MSp.syste}, \eqref{MSp.2B_m} be the multipoint boundary-value problems from Theorem~\ref{MSp.2apr_thm}. They have an additional approximation property with respect to the problem \eqref{2MSp.syste}, \eqref{2MSp.kue}.

\begin{remark}\label{4uat.rem_1}
Given $k\gg1$, consider the (uniquely solvable) boundary-value problems
\begin{align*}
L_{k}\widehat{y}_k(t)&=f_k(t)\quad\mbox{whenever}\quad a\leq t\leq b,\\
B_{k}\widehat{y}_k&=q_k,
\end{align*}
whose right-hand sides $f_k\in(C^{(n-r)})^{m}$ and $q_k\in\mathbb{C}^{rm}$ satisfy
\begin{equation}\label{4aut.r.s.est}
\|f-f_k\|_{(n-r)}<\varepsilon\quad\mbox{and}\quad
\|q-q_k\|<\varepsilon
\end{equation}
for a certain number $\varepsilon>0$, with $\|\cdot\|$ denoting the norm in $\mathbb{C}^{rm}$. Then there exist positive numbers $\varkappa$ and $\varrho$ such that
\begin{equation}\label{error}
\|y-\widehat{y}_k\|_{(n)}<\varkappa\,\varepsilon
\quad\mbox{whenever}\quad k\geq\varrho.
\end{equation}
The number $\varkappa$ can be chosen not depending on $\varepsilon$, $f$, $f_k$, $q$, and $q_k$, whereas $\varrho$ can be chosen not depending on $f_k$ and $q_k$. Specifically, if $f_k\to f$ in $(C^{(n-r)})^{m}$ and $q_k\to q$ in $\mathbb{C}^{rm}$ as $k\to\infty$, then $\widehat{y}_k\to y$ in $(C^{(n)})^{m}$ as $k\to\infty$.
\end{remark}

\begin{remark}\label{rem1}
The form of the continuous linear operators $B_k:(C^{(n)})^{m}\to\mathbb{C}^{rm}$ is not important in Remark~\ref{4uat.rem_1}. It is enough to know that the sequence of the corresponding boundary-value problems \eqref{3MSp.syste}, \eqref{MSp.2B_m} satisfies the conclusion of Theorem~\ref{MSp.2apr_thm}.
\end{remark}

Remark~\ref{rem1} follows evidently from the proof of Remark~\ref{4uat.rem_1}.

\begin{proof}[Proof of Remark~$\ref{4uat.rem_1}$]
With problems \eqref{2MSp.syste}, \eqref{2MSp.kue}  and \eqref{3MSp.syste}, \eqref{MSp.2B_m}, we associate the continuous linear operators $(L,B)$ and $(L_k,B_k)$ from $(C^{(n)})^{m}$ to $(C^{(n-r)})^{m}\times\mathbb{C}^{rm}$. The operators $(L,B)$ and $(L_k,B_k)$ whenever $k\geq\varrho_1$ are invertible by our assumption and Theorem~\ref{MSp.2apr_thm}. Here, $\varrho_1$ is some positive integer that depends only on the sequence $((L_k,B_k))_{k=1}^{\infty}$.
Consider the inverses $(L,B)^{-1}$ and $(L_k,B_k)^{-1}$ of these operators. According to Theorem~\ref{MSp.2apr_thm},
\begin{equation}\label{f16}
(L_k,B_k)^{-1}(f,q)=y_{k}\to y=(L,B)^{-1}(f,q)
\quad\mbox{in}\quad(C^{(n)})^{m}\quad\mbox{as}\quad k\to\infty
\end{equation}
for all $f\in(C^{(n-r)})^{m}$ and $q\in\mathbb{C}^{rm}$; i.e,   $(L_k,B_k)^{-1}$ converges to $(L,B)^{-1}$ in the strong operator topology. Therefore in view of the Banach-Steinhaus theorem, there exists a number $\varkappa_1>0$ such that  $\|(L_k,B_k)^{-1}\|\leq\varkappa_1$ whenever $k\geq\varrho_1$. Hence,
\begin{align*}
\|\widehat{y}_{k}-y\|_{(n)}&=
\|(L_k,B_k)^{-1}(f_k,q_k)-(L,B)^{-1}(f,q)\|_{(n)}\\
&\leq\|(L_k,B_k)^{-1}(f_k,q_k)-(L_k,B_k)^{-1}(f,q)\|_{(n)}\\
&\quad+\|(L_k,B_k)^{-1}(f,q)-(L,B)^{-1}(f,q)\|_{(n)}\\
&\leq\varkappa_{1}(\|f_k-f\|_{(n-r)}+\|q_k-q\|)+\|y_{k}-y\|_{(n)}\\
&<2\varkappa_{1}\varepsilon+\varepsilon=\varkappa\varepsilon
\quad\mbox{whenever}\quad k\geq\varrho,
\end{align*}
due to \eqref{4aut.r.s.est} and \eqref{f16}. Here, $\varkappa:=2\varkappa_{1}+1$, and the number $\varrho\geq\varrho_1$ satisfies the implication $k\geq\varrho\Rightarrow\|y_{k}-y\|_{(n)}<\varepsilon$.
\end{proof}

\medskip

{\small

\end{document}